\theoremstyle{plain} 
\newtheorem{theorem}{Theorem}[section]
\newtheorem{lemma}[theorem]{Lemma}
\theoremstyle{definition}
\newtheorem{definition}[theorem]{Definition}
\newtheorem{remark}[theorem]{Remark}
\newtheorem{example}[theorem]{Example}
\begin{document}

\title{A wide class of examples of pretorsion theories and related remarks}
\author{Zurab Janelidze}
\address{Department of Mathematical Sciences, Stellenbosch University, South Africa}
\address{
National Institute for Theoretical and Computational Sciences (NITheCS), Stellenbosch, South Africa }
\maketitle

\begin{abstract} 
In this paper we construct a wide class of examples of \emph{pretorsion theories} in the sense of A.~Facchini, C.~Finocchiaro, and M.~Gran. Given a category $\mathbb{C}$ with a terminal object $1$ and a category $\mathbb{D}$ with an initial object $0$, we show that $(\mathbb{C}\times \mathbf{0},\mathbf{1}\times\mathbb{D})$ is a pretorsion theory in $\mathbb{C}\times\mathbb{D}$ if and only if each morphism $0\to C$ in $\mathbb{C}$ is a monomorphism, and each morphism $D\to 1$ in $\mathbb{D}$ is an epimorphism. Here $\mathbf{0}$ denotes the set of initial objects in $\mathbb{D}$ and $\mathbf{1}$ denotes the set of terminal objects in $\mathbb{C}$. We then remark that the result generalised to products of arbitrary pretorsion theories.
\end{abstract}

\section{Introduction}

In a category $\mathcal{C}$, a \emph{pretorsion theory} in the sense of \cite{FFG21} is a pair $(\mathcal{T},\mathcal{F})$ of full replete subcategories of $\mathcal{C}$, such that for the ideal of morphisms that factor through objects in the intersection $\mathcal{Z}=\mathcal{T}\cap\mathcal{F}$, the following conditions hold:
\begin{itemize}
    \item[(T1)] Every morphism from an object in $\mathcal{T}$ to an object in $\mathcal{F}$ is a \emph{null morphism}, i.e., factors through an object in $\mathcal{Z}$.

    \item[(T2)] Every object $C$ of $\mathcal{C}$ is part of a short-exact sequence, i.e., a sequence $T\to C\to F$ of morphisms such that $T\to C$ is a kernel of $C\to F$ and conversely, $C\to F$ is a cokernel of $T\to C$. 
\end{itemize}
Here kernels and cokernels are relative to the ideal $\mathcal{N}$ of morphisms that factor through an object in $\mathcal{Z}$. These are by now well-established notions (see \cite{FFG21} and the references there), so we do not recall them here. 

The notion of a pretorsion theory is a vast generalisation of the classical notion of torsion theory in an abelian category introduced in \cite{Dic66}. There are various intermediate generalisations in the literature --- see \cite{FFG21,GJ20} and the references there. In \cite{FFG21}, it is shown that the notion of a pretorsion theory is highly versatile in the sense that, on one hand, many properties can be deduced from the simple definition, and on the other hand, there seems to be a big variety of examples.

In this paper we describe a wide class of examples of pretorsion theories which strongly makes use of the full generality of the notion of a pretorsion theory defined in \cite{FFG21}. The nature of these examples is then clarified using products of pretorsion theories. We conclude with a couple of remarks motivating the study of the category of pretorsion theories (with varying underlying category). 

\section{The special case}

In the notation used in the Abstract, let us explore what it takes for $(\mathbb{C}\times \mathbf{0},\mathbf{1}\times\mathbb{D})$ to be a pretorsion theory in the sense of \cite{FFG21}. Note that:
\begin{itemize}
    \item $\mathbb{C}\times \mathbf{0}$ and $\mathbf{1}\times\mathbb{D}$ are each closed under isomorphisms in $\mathbb{C}\times\mathbb{D}$.

    \item The intersection $\mathcal{Z}=(\mathbb{C}\times \mathbf{0})\cap(\mathbf{1}\times\mathbb{D})=\mathbf{1}\times\mathbf{0}$ consists of all $(T,I)$, where $T$ is a terminal object in $\mathbb{C}$ and $I$ is an initial object in $\mathbb{D}$.

    \item Therefore, a morphism $(A,I)\to (T,B)$ with $(A,I)\in\mathbb{C}\times\mathbf{0}$ and $(T,B)\in\mathbf{1}\times\mathbb{D}$ is always null (i.e., it factorizes through an object in $\mathcal{Z}$, namely, through $(T,I)$).
\end{itemize}
So, for $(\mathbb{C}\times \mathbf{0},\mathbf{1}\times\mathbb{D})$ to be a pretorsion theory, it is necessary and sufficient that every object $(C,D)$ is part of a short exact sequence:
\begin{equation}\label{EquA}\xymatrix@=30pt{(X,I)\ar[r]^-{(m_1,m_2)} & (C,D)\ar[r]^-{(e_1,e_2)} & (T,Y)}\end{equation}
where $I\in\mathbf{0}$ and $T\in\mathbf{1}$.
We note that (\ref{EquA}) is a short exact sequence if and only if the following conditions hold:
\begin{itemize}
\item[(E1)] $(m_1,m_2)$ is a monomorphism. 

\item[(E2)] $(e_1,e_2)$ is an epimorphism. 

\item[(E3)] The composite $(e_1,e_2)\circ (m_1,m_2)$ is null; indeed it is, as it factors through $(I,T)\in\mathcal{Z}$.

\item[(E4)] Whenever a composite $(e_1,e_2)\circ(u_1,u_2)$ is null, we have $(u_1,u_2)=(m_1,m_2)\circ (u'_1,u'_2)$ for some $u'_1,u'_2$.

\item[(E5)] Whenever a composite $(v_1,v_2)\circ(m_1,m_2)$ is null, we have $(v_1,v_2)=(v'_1,v'_2)\circ (e_1,e_2)$ for some $v'_1,v'_2$.
\end{itemize}
We prove the following:

\begin{lemma}\label{LemA}
The diagram (\ref{EquA}) in $\mathbb{C}\times\mathbb{D}$, where $I\in\mathbf{0}$ and $T\in\mathbf{1}$, is a short exact sequence if and only if $m_1$ is an isomorphism, $m_2$ is a monomorphism, $e_1$ is an epimorphism and $e_2$ is an isomorphism.     
\end{lemma}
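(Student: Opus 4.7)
The plan is to translate conditions (E1)--(E5) into conditions on coordinate morphisms, using two basic observations: monomorphisms (respectively, epimorphisms) in $\mathbb{C}\times\mathbb{D}$ are exactly the coordinate-wise monomorphisms (respectively, epimorphisms); and since $\mathcal{Z}=\mathbf{1}\times\mathbf{0}$, a morphism $(u_1,u_2):(A,B)\to(C,D)$ is null if and only if $u_1$ factors through some terminal object of $\mathbb{C}$ and $u_2$ factors through some initial object of $\mathbb{D}$.

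For the ``only if'' direction, (E1) and (E2) immediately give that $m_1,m_2$ are monos and $e_1,e_2$ are epis. To obtain the two isomorphism assertions, I would apply (E4) to the test morphism $(1_C, m_2):(C,I)\to(C,D)$: the composite $(e_1, e_2)\circ(1_C, m_2)=(e_1, e_2\circ m_2)$ is null (its first coordinate factors through $T$ as $e_1$ itself, and its second coordinate factors through $I$ trivially), so (E4) produces a factorization $(1_C, m_2)=(m_1, m_2)\circ(u'_1, u'_2)$. The identity $m_1\circ u'_1=1_C$ exhibits $u'_1$ as a right inverse of $m_1$; combined with $m_1$ being a mono, this forces $m_1$ to be an isomorphism. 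A symmetric application of (E5) to $(e_1, 1_D):(C,D)\to(T,D)$ produces a left inverse of $e_2$, making $e_2$ an isomorphism.

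For the ``if'' direction, (E1)--(E3) are immediate from the hypotheses. For (E4), if $(e_1, e_2)\circ(u_1, u_2)$ is null, then $e_2\circ u_2$ factors through some initial object $I'$; since $e_2$ is an iso, $u_2$ itself factors through $I'$. The uniqueness of morphisms from initial objects forces the factor $I'\to D$ to coincide, after pre-composition with the unique iso $I\to I'$, with $m_2$, so one can rewrite the factorization as $u_2=m_2\circ u'_2$ for some $u'_2:B\to I$. Then $u'_1=m_1^{-1}\circ u_1$ completes the required factorization $(u_1,u_2)=(m_1,m_2)\circ(u'_1,u'_2)$. Condition (E5) is handled symmetrically: $v_1\circ m_1$ factors through some terminal object $T''$, and since $m_1$ is an iso, $v_1$ factors through $T''$; the universal property of $T$ rewrites the resulting factor $C\to T''$ as a composite via $e_1$, yielding $v_1=v'_1\circ e_1$, while $v'_2=v_2\circ e_2^{-1}$ handles the second coordinate. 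The only minor subtlety in the whole argument is translating factorizations through arbitrary objects of $\mathcal{Z}$ into factorizations through the specific $(T,I)$ of the sequence, which is routine via the universal properties of terminal and initial objects.
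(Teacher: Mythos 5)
Your proposal is correct and follows essentially the same route as the paper: componentwise characterization of monos/epis and null morphisms, the test morphism $(1_C,m_2)$ fed into (E4) to split $m_1$ (dually $(e_1,1_D)$ into (E5) for $e_2$), and the initiality/terminality of $I$ and $T$ to verify (E4)--(E5) in the converse direction. The only cosmetic difference is that the paper reduces to the case where $m_1$ and $e_2$ are identities, whereas you carry their inverses explicitly.
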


\begin{proof} First, we make some general observations:
\begin{itemize}
    \item[(G)] $(f_1,f_2)$ is a monomorphism/epimorphism if and only if so are each $f_i$.
\end{itemize}
Now, suppose (\ref{EquA}) is a short exact sequence. (G) together with (E1-2) leave us to show that $m_1$ is a split epimorphism and symmetrically, $e_2$ is a split monomorphism. Thanks to commutativity of the right-hand side square below and (E4), we get a commutative triangle on the left:
$$
\xymatrix@=30pt{(X,I)\ar[r]^-{(m_1,m_2)} & (C,D)\ar[r]^-{(e_1,e_2)} & (T,Y)\\ & (C,I)\ar[ul]^-{(u'_1,1_I)}\ar[u]_-{(1_C,m_2)}\ar[r]_-{(e_1,1_I)} & (T,I)\ar[u]_-{(1_T,e_2m_2)}} $$
Then $m_1u'_1=1_C$, proving that $m_1$ is a split epimorphism. That $e_2$ is a split monomorphism can be proved similarly (with a dual-symmetric argument).

For the converse, suppose all the assumptions stated for $m_i,e_i$ in the lemma hold. It is not difficult to see that since $m_1$ and $e_2$ are isomorphisms, without loss of generality we can assume that they are actually identity morphisms. So the sequence (\ref{EquA}) becomes the top line of the following diagram:
$$
\xymatrix@=30pt{(C,I)\ar[r]^-{(1_C,m_2)} & (C,D)\ar[r]^-{(e_1,1_D)} & (T,D)\\ & (U_1,U_2)\ar[u]_-{(u_1,u_2)}\ar[r]_-{(e_1u_1,d)}\ar@{-->}[ul]^-{(u_1,d)} & (T,I)\ar[u]_-{(1_T,m_2)}} $$
(E1-2) hold by (G) and we already know (E3) holds. To prove (E4), suppose $(e_1,1_D)\circ (u_1,u_2)$ factors through an object in $\mathcal{Z}$. Without loss of generality, we can assume that this object is $(T,I)$. This gives the rest of the above diagram of solid arrows. Now, $(u_1,u_2)$ indeed factors through $(1_C,m_2)$ as required in (E3): look at the dashed arrow in the diagram above. The proof of (E5) is similar.
\end{proof}

We are ready to prove the result announced in the abstract:

\begin{theorem}\label{TheA}
$(\mathbb{C}\times \mathbf{0},\mathbf{1}\times\mathbb{D})$ is a pretorsion theory if and only if every morphism $C\to 1$ in $\mathbb{C}$ is an epimorphism and every morphism $0\to D$ in $\mathbb{D}$ is a monomorphism.
\end{theorem}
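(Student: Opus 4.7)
The plan is to reduce the entire claim to verifying condition (T2) and then invoke Lemma~\ref{LemA}. The preliminary bullet points preceding Lemma~\ref{LemA} already establish repleteness of the two subcategories and condition (T1), so what remains is precisely that every $(C,D)\in\mathbb{C}\times\mathbb{D}$ must fit into a short exact sequence of shape (\ref{EquA}). Lemma~\ref{LemA} rephrases this requirement: such a sequence exists if and only if there is an initial object $I\in\mathbf{0}$ admitting a monomorphism into $D$ and a terminal object $T\in\mathbf{1}$ admitting an epimorphism from $C$.

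For the sufficiency direction, I would take the distinguished morphisms themselves: set $X=C$, $Y=D$, $m_1=1_C$, $e_2=1_D$, and let $m_2:0\to D$ and $e_1:C\to 1$ be the unique morphisms into $D$ from a chosen initial object and to a chosen terminal object. The hypotheses of the theorem directly supply the mono/epi conditions required by Lemma~\ref{LemA}, whose backward implication then certifies that the constructed sequence is short exact.

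For necessity, given any $C\in\mathbb{C}$ and $D\in\mathbb{D}$, I would apply (T2) to $(C,D)$ and extract from Lemma~\ref{LemA} a monomorphism $I\to D$ with $I\in\mathbf{0}$ together with an epimorphism $C\to T$ with $T\in\mathbf{1}$. Pre-composing (resp.\ post-composing) with the canonical isomorphism $0\cong I$ (resp.\ $T\cong 1$) then transfers these properties to the distinguished morphisms $0\to D$ and $C\to 1$.

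The argument is essentially routine once Lemma~\ref{LemA} is in hand; the only point requiring mild care is the translation between the existential phrasing used in Lemma~\ref{LemA} (\emph{some} initial/terminal object with such a map) and the universal phrasing in the theorem statement (\emph{every} morphism from an initial, resp.\ to a terminal, object having the property), which is mediated by uniqueness of initial and terminal objects up to unique isomorphism.
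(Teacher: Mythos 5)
Your proposal is correct and follows essentially the same route as the paper: both directions reduce, via the preliminary discussion of (T1) and nullity, to checking (T2), and then apply Lemma~\ref{LemA} forwards (to extract the monomorphism $0\to D$ and epimorphism $C\to 1$ from a given short exact sequence) and backwards (to certify the sequence $(C,0)\to(C,D)\to(1,D)$ built from those distinguished morphisms). Your explicit remark about transferring the mono/epi property along the canonical isomorphisms $0\cong I$ and $T\cong 1$ is a point the paper's proof leaves implicit, but it is the same argument.
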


\begin{proof} Suppose $(\mathbb{C}\times \mathbf{0},\mathbf{1}\times\mathbb{D})$ is a pretorsion theory. Then any object $(C,D)$ is part of a short exact sequence, which by Lemma~\ref{LemA} can be written out as
$$\xymatrix@=30pt{(C,0)\ar[r]^-{(1_C,m_2)} & (C,D)\ar[r]^-{(e_1,1_D)} & (1,D)}$$
where $m_2\colon C\to D$ is a monomorphism and $e_1\colon C\to 1$ is an epimorphism. To prove the converse, we can apply Lemma~\ref{LemA} again and use the above sequence for each $(C,D)$. Thanks to the discussion at the start of the section, this would conclude the proof.
\end{proof}

So in such pretorsion theory, the ``torsion part'' of an object $(C,D)$ is given by $(C,0)$, whereas the ``torsion-free part'' is given by $(1,D)$. 

\begin{remark}
The property that every morphism going out from an initial object $0$ is a monomorphism, under the presence of a terminal object $1$, is equivalent to the property that the unique morphism $0\to 1$ is a monomorphism. This is a useful generalisation of pointedness in categorical algebra, first emphasized in \cite{Bou01} (see also \cite{GJ17}).    
\end{remark}

\begin{example}\label{ExaA}
Let $\mathbb{C}=\mathbf{Set}^\mathsf{op}$ and $\mathbb{D}=\mathbf{Set}$. We can think of objects $(X,Y)$ in $\mathbf{Set}^\mathsf{op}\times \mathbf{Set}$ as sets having two types of elements: \emph{positive} elements (elements of $Y$) and \emph{negative} elements (elements of $X$). To emphasize this intuition, we can write $Y-X$ for $(X,Y)$. Theorem~\ref{TheA} is applicable here, since $\varnothing\to S$ is an injective function, for any set $S$. The torsion objects in the corresponding pretorsion theory are \emph{negative sets}, i.e., sets having only negative elements, while torsion-free objects are \emph{positive sets}, i.e., sets having only positive elements. The short exact sequence that every object $Y-X$ fits into identified the ``subset'' $\varnothing-X$ of $Y-X$ consisting of negative elements, and the subset $Y-\varnothing$ of $X-Y$ consisting of positive elements:
$$\xymatrix{\varnothing-X\ar[r] & Y-X\ar[r] & Y-\varnothing}$$
\end{example}

\section{Thin pretorsion theories}

As shown in \cite{FFG21}, the torsion and the torsion-free part of an object are, in general, unique (up to canonical isomorphisms). In our case, the reverse is also two: two objects having isomorphic torsion and torsion-free pair of objects will necessarily be isomorphic. Indeed, a much stronger property holds.

\begin{definition} Call a pretorsion theory $(\mathcal{T},\mathcal{F})$ in a category $\mathcal{C}$ a \emph{thin} pretorsion theory, when the functor $\mathcal{C}\to \mathcal{T}\times \mathcal{F}$ which assigns to each object $X$ the pair $(t(X),f(X))$ of its torsion and torsion-free part\footnote{See \cite{FFT21} for the constructions of the functors $f$ and $t$.}, is an equivalence of categories.
\end{definition}

That our pretorsion theories are thin is obvious after observing that the functor $\mathcal{C}\to \mathcal{T}\times\mathcal{F}$ from the definition above is the equivalence $$T\times F\colon\mathbb{C}\times\mathbb{D}\to (\mathbb{C}\times \mathbf{0})\times (\mathbf{1}\times\mathbb{D}),$$ where $T$ and $F$ are equivalences given by $T(C)=(C,0)$ and $F(D)=(1,D)$.

\section{The general case}

Our construction of a pretorsion theory can be generalised significantly, as witnessed by the following results.

\begin{lemma}
For any category $\mathbb{C}$ having a terminal object, the pair $(\mathbb{C},\mathbf{1})$ where $\mathbf{1}$ is the class of terminal objects in $\mathbb{C}$, is a pretorsion theory if and only if every morphism to the terminal object is an epimorphism. Such torsion theories are thin.
\end{lemma}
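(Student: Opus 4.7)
The plan is to mirror the structure of the proof of Theorem~\ref{TheA}, of which the present lemma is essentially the degenerate case obtained by letting the second factor collapse to a one-object category. First I would identify the null ideal: since $\mathcal{Z}=\mathbb{C}\cap\mathbf{1}=\mathbf{1}$, a morphism is null precisely when it factors through some terminal object. Axiom (T1) is then immediate, as any morphism $f\colon C\to T$ with $T$ terminal factors as $f=1_T\circ f$ through $T\in\mathcal{Z}$.

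The bulk of the argument concerns (T2). For each object $C$ I would propose the candidate short exact sequence $C\xrightarrow{1_C} C\xrightarrow{!} 1$, where $1$ is a chosen terminal object. The composite is null, and the kernel condition at $1_C$ is immediate since $1_C$ is an isomorphism. The key point is the cokernel condition at $!$: given a null morphism $h\colon C\to Y$, existence of a factorization $h=h'\circ\,!$ follows by unpacking the definition of null (composing with the canonical isomorphism between the terminal object through which $h$ factors and $1$), while uniqueness of $h'$ is precisely the condition that $!$ is an epimorphism. This gives the ``if'' direction.

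For the converse, I would observe that whenever a short exact sequence $T\to C\to F$ with $F\in\mathbf{1}$ exists, the morphism $C\to F$ is a cokernel in the sense of \cite{FFG21} and is therefore an epimorphism; hence the unique morphism $C\to 1$ is an epimorphism. Thinness is transparent: the subcategory $\mathbf{1}$ is equivalent to the terminal category since all its objects are isomorphic via unique isomorphisms, so the functor $C\mapsto(C,1)$ from $\mathbb{C}$ to $\mathbb{C}\times\mathbf{1}$ is an equivalence. The only genuinely interesting step in the whole argument is identifying the epimorphism condition with uniqueness in the cokernel property; the rest is bookkeeping.
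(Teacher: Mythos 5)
Your argument is correct, but it is the ``simple direct proof'' that the paper only alludes to, whereas the paper's actual proof is a reduction: it applies Theorem~\ref{TheA} with $\mathbb{D}=\{\ast\}$ the trivial one-object category (so that $\mathbf{0}=\{\ast\}$ and every morphism $\ast\to\ast$ is vacuously a monomorphism) and then transports the resulting pretorsion theory $(\mathbb{C}\times\{\ast\},\mathbf{1}\times\{\ast\})$ along the canonical isomorphism $\mathbb{C}\approx\mathbb{C}\times\{\ast\}$. You announce this degeneration as your plan but then execute a self-contained verification instead: identifying the null ideal as the morphisms factoring through a terminal object, checking (T1) trivially, exhibiting $C\xrightarrow{1_C}C\xrightarrow{!}1$ as the short exact sequence, and locating the epimorphism hypothesis exactly in the uniqueness clause of the cokernel property; for the converse you use that cokernels relative to an ideal are epimorphisms. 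All of these steps are sound (in particular, uniqueness of the cokernel factorisation for the null morphisms $a\circ{!}=b\circ{!}$ does recover that $!$ is an epimorphism, so the equivalence is genuinely two-sided). The trade-off: the paper's route is essentially free once Theorem~\ref{TheA} is in hand and emphasises that the lemma is a shadow of the product construction, while your route is independent of Theorem~\ref{TheA}, makes transparent \emph{why} the epimorphism condition is exactly what (T2) needs, and would let the lemma be proved first and Theorem~\ref{TheA} derived from it together with Theorem~\ref{TheB} --- which is in fact the logical order the paper suggests later in the same section.
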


\begin{proof}
This lemma has a simple direct proof. However, we can also derive it from Theorem~\ref{TheA}. Consider the trivial category $\mathbf{0}=\{\ast\}$ and apply Theorem~\ref{TheA} to $\mathbb{C}$ and $\mathbb{D}=\mathbf{0}$. Then we obtain that every morphism to the terminal object in $\mathbb{C}$ is an epimorphism if and only if $(\mathbb{C}\times \{\ast\},\mathbf{1}\times\{\ast\})$ is a pretorsion theory in $\mathbb{C}\times\{\ast\}$. The statement of the lemma then follows thanks to the canonical isomorphism $\mathbb{C}\approx\mathbb{C}\times\{\ast\}$.  
\end{proof}

Dually and symmetrically, we have:

\begin{lemma}
For any category $\mathbb{D}$ having an initial object, the pair $(\mathbf{0},\mathbb{D})$, where $\mathbf{0}$ is the class of initial objects in $\mathbb{D}$, is a pretorsion theory if and only if every morphism from the initial object is a monomorphism. Such torsion theories are thin.
\end{lemma}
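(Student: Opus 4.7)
The plan is to mirror the derivation of the previous lemma. I would apply Theorem~\ref{TheA} to the pair where $\mathbb{C}$ is the trivial one-object category $\{\ast\}$ and $\mathbb{D}$ is the given category. Since $\{\ast\}$ has a (unique) terminal object and its only morphism is the identity (hence trivially an epimorphism), the hypothesis ``every morphism $C\to 1$ in $\mathbb{C}$ is an epimorphism'' is automatic, and the hypothesis ``every morphism $0\to D$ in $\mathbb{D}$ is a monomorphism'' is precisely the one appearing in the lemma. Theorem~\ref{TheA} then yields that $(\{\ast\}\times\mathbf{0},\mathbf{1}\times\mathbb{D})$ is a pretorsion theory in $\{\ast\}\times\mathbb{D}$, which under the canonical isomorphism $\mathbb{D}\approx\{\ast\}\times\mathbb{D}$ translates exactly into the claim that $(\mathbf{0},\mathbb{D})$ is a pretorsion theory in $\mathbb{D}$.

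For thinness, I would invoke the observation from Section 3: the torsion/torsion-free assignment factors (under the above isomorphism) as $D\mapsto(0,D)$, which is an equivalence $\mathbb{D}\to\mathbf{0}\times\mathbb{D}$ because $\mathbf{0}$ is an equivalence of categories with the terminal category (all initial objects being canonically isomorphic, with a unique isomorphism between any two). Alternatively, one can simply note that the argument at the end of Section 3, applied in this degenerate setting, shows the torsion/torsion-free functor is an equivalence.

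I do not expect any genuine obstacle: the proof is purely formal duality/specialisation of the already established results. The only point that merits a brief verification is that the needed direction of the ``canonical isomorphism'' transports both the pretorsion theory structure and the thinness property, but this is immediate because equivalences of categories preserve and reflect limits, colimits, monomorphisms, epimorphisms, and hence the entire pretorsion theory data. The whole argument should therefore fit in a short paragraph, essentially saying ``apply Theorem~\ref{TheA} with $\mathbb{C}=\{\ast\}$, and use $\mathbb{D}\approx\{\ast\}\times\mathbb{D}$.''
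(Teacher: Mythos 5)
Your proposal is correct and matches the paper's intended argument: the paper states this lemma as the ``dual and symmetric'' counterpart of the preceding one, whose proof is exactly the specialisation of Theorem~\ref{TheA} to a trivial factor (there $\mathbb{D}=\{\ast\}$, here $\mathbb{C}=\{\ast\}$) followed by transport along the canonical isomorphism with the product, and your treatment of thinness via the Section~3 observation is likewise the intended one.
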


Theorem~\ref{TheA} is then a consequence of the two lemmas above and the following result, the proof of which we omit since it is a simple routine, especially after seing the proof of Theorem~\ref{TheA}.

\begin{theorem}\label{TheB}
Consider a non-empty family $\mathbb{C}=(\mathbb{C}_i)_{i\in I}$ of non-empty categories and a family $(\mathcal{T}_i,\mathcal{F}_i)_{i\in I}$ of pairs of categories. The pair
$$(\Pi\mathcal{T},\Pi\mathcal{F})=\left(\prod_{i\in I}\mathcal{T}_i,\prod_{i\in I}\mathcal{F}_i\right)$$
is a pretorsion theory in $\Pi\mathbb{C}=\prod_{i\in I}\mathbb{C}_i$ if and only if each $(\mathcal{T}_i,\mathcal{F}_i)$ is a pretorsion theory in $\mathbb{C}_i$. Moreover, each of these pretorsion theories is thin if and only if the former one is.  
\end{theorem}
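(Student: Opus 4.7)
The plan is to reduce everything to componentwise statements, mirroring the proof of Theorem~\ref{TheA}. I would first establish four componentwise characterisations in the product category $\Pi\mathbb{C}$: (i) monomorphisms and epimorphisms in $\Pi\mathbb{C}$ are precisely the families of componentwise monomorphisms and epimorphisms; (ii) the intersection $\Pi\mathcal{T}\cap\Pi\mathcal{F}$ coincides with $\prod_i\mathcal{Z}_i$, where $\mathcal{Z}_i=\mathcal{T}_i\cap\mathcal{F}_i$; (iii) a morphism in $\Pi\mathbb{C}$ is null (relative to $\prod_i\mathcal{Z}_i$) if and only if each of its components is null relative to $\mathcal{Z}_i$; and (iv) a sequence $(T_i)\to(C_i)\to(F_i)$ in $\Pi\mathbb{C}$ is short exact if and only if each $T_i\to C_i\to F_i$ is short exact in $\mathbb{C}_i$. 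Item (iv) falls out of (i)--(iii) together with the fact that limits and colimits in a product category are computed componentwise, so the universal properties defining kernels and cokernels can be tested coordinate by coordinate.

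The ``if'' direction is then immediate: assuming (T1) and (T2) on each factor, (T1) on the product follows from (iii) applied componentwise, while (T2) on the product is obtained by choosing a short exact sequence for each coordinate of a given object and assembling them using (iv).

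For the ``only if'' direction, which is where the one mild subtlety enters, I would first extract from (T2) on the product the fact that each $\mathcal{Z}_j$ is non-empty: since each $\mathbb{C}_j$ is non-empty, any object of $\Pi\mathbb{C}$ exists and admits a short exact sequence whose null composite factors through some member of $\prod_j\mathcal{Z}_j$. Fixing $i\in I$ and choosing $Z_j\in\mathcal{Z}_j$ for each $j\neq i$, I would verify (T1) for $(\mathcal{T}_i,\mathcal{F}_i)$ by extending any morphism $f\colon T\to F$ in $\mathbb{C}_i$ (with $T\in\mathcal{T}_i$, $F\in\mathcal{F}_i$) to the product morphism whose $i$-th coordinate is $f$ and whose $j$-th coordinate is $1_{Z_j}$; since $\mathcal{Z}_j\subseteq\mathcal{T}_j\cap\mathcal{F}_j$, this extension goes from $\Pi\mathcal{T}$ to $\Pi\mathcal{F}$ and is therefore null, so $f$ is null by (iii). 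For (T2), I would extend any $C\in\mathbb{C}_i$ to the product object $(C,(Z_j)_{j\neq i})$, take its short exact sequence in the product, and read off the $i$-th coordinate using (iv).

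For the thinness claim, the torsion/torsion-free functor $\Pi\mathbb{C}\to\Pi\mathcal{T}\times\Pi\mathcal{F}$ agrees, up to the canonical reshuffling of factors, with $\prod_i(\mathbb{C}_i\to\mathcal{T}_i\times\mathcal{F}_i)$; since a product of functors indexed by a non-empty family of non-empty categories is an equivalence if and only if each factor is, thinness transfers in both directions. The main (and essentially only) obstacle in the whole proof is establishing non-emptiness of each $\mathcal{Z}_j$ before running the extension trick in the ``only if'' direction of (T1); after that, everything is genuinely routine.
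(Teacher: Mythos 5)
Your proposal is correct and follows essentially the same route as the paper: reduce nullity, monomorphisms/epimorphisms, and short exact sequences to componentwise statements, then handle the ``only if'' direction by extending single-coordinate data to the product (you are, if anything, more explicit than the paper about first securing the non-emptiness of each $\mathcal{Z}_j$). The one caution is that your justification of (iv) by appeal to ``limits and colimits in a product category are computed componentwise'' is slightly off-target, since these kernels and cokernels are taken relative to an ideal of null morphisms rather than as ordinary (co)limits; the correct argument for the hard direction of (iv) is exactly the extension trick you already deploy for (T1) and (T2) --- given a test morphism $u_i$ at coordinate $i$, fill the remaining coordinates with the given $m_j$ (resp.\ $e_j$) and use that nullity and the monomorphism/epimorphism properties are componentwise --- which is precisely how the paper argues.
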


\begin{proof} Firstly, we remark that when either of the two conditions hold, each $\mathcal{T}_i,\mathcal{F}_i$ are non-empty. So without loss of generality we may assume that these categories are non-empty from the onset. 
Under this assumption, each $\mathcal{T}_i,\mathcal{F}_i$ are full replete subcategories of $\mathbb{C}_i$ if and only if $\Pi\mathcal{T},\Pi\mathcal{F}$ are full replete subcategories of $\Pi\mathbb{C}$.

We have:
$$\Pi\mathcal{T}\cap \Pi\mathcal{T}=\Pi_{i\in I}(\mathcal{T}_i\cap \mathcal{F}_i).$$
This implies that a morphism is null in $\Pi\mathbb{C}$ if and only if it is null in each $\mathbb{C}_i$. So null-morphisms in $\Pi\mathbb{C}$ are \emph{component-wise}. This clearly guarantees that (T1) holds for $(\Pi\mathcal{T},\Pi\mathcal{F})$ if and only if it holds for each $(\mathcal{T}_i,\mathcal{F}_i)$. To guarantee the same for (T2) it suffices to show that short exact sequences in $\Pi\mathbb{C}$ are also component-wise (an analogue, and in fact a generalisation of Lemma~\ref{LemA}). It is easy to see that if each 
$$\xymatrix{T_i\ar[r]^-{m_i} & C_i\ar[r]^-{e_i} & F_i}$$
is a short exact sequence, then so is 
$$\xymatrix{(T_i)_{i\in I}\ar[r]^-{(m_i)_{i\in I}} & (C_i)_{i\in I}\ar[r]^-{(e_i)_{i\in I}} & (F_i)_{i\in I}.}$$
Conversely, suppose the above is a short exact sequence. To show that the previous sequence is also short exact for some $i\in I$, consider a morphism $u_i\colon U_i\to C$ such that $e_iu_i$ is null. Define $U_j=T_j$ when $j\neq i$ and $u_j=m_j$ when $j\neq i$. Then the composite $(e_i)_{i\in I}(u_i)_{i\in I}$ is null. This results in a factorisation of $(u_i)_{i\in I}$ through $(m_i)_{i\in I}$, and hence a factorisation of $u_i$ through $m_i$. By the fact that being a monomorphism is a component-wise property and duality, we obtain the desired: that the sequence at $i$ shown above is short exact.

Next, we prove that thinness is preserved and reflected under products. It is easy to see that the functor $K\colon \Pi\mathbb{C}\to \Pi\mathcal{T}\times \Pi\mathcal{F}$ that assigns to each objects its torsion and torsion-free part can be built component-wise using similar functors at each $K_i$. In fact, we have a commutative diagram
$$\xymatrix{\Pi\mathbb{C}\ar[r]^-{K}\ar[rd]_-{\Pi_{i\in I}(K_i)\;\;\;} & \Pi\mathcal{T}\times \Pi\mathcal{F}\ar[d]^-{L}\\ & \Pi_{i\in I}(\mathcal{T}_i\times \mathcal{F}_i)}$$
where $L$ is an isomorphism. So $K$ will be an equivalence if and only if each $K_i$ is an equivalence. 
\end{proof}

Between Theorem~\ref{TheA} and the theorem above, there is also the following interesting intermediate result.

\begin{theorem} For a category $\mathbb{C}$, its full replete subcategory $\mathbf{1}$, and a category $\mathbb{D}$ with a full replete subcategory $\mathbf{0}$, the pair 
$(\mathbb{C}\times \mathbf{0},\mathbf{1}\times\mathbb{D})$ is a pretorsion theory in $\mathbb{C}\times\mathbb{D}$ if and only if $\mathbf{1}$ is an epireflective subcategory of $\mathbb{C}$ and $\mathbf{0}$ is a monocoreflective subcategory of $\mathbb{D}$. Such torsion theories are thin.
\end{theorem}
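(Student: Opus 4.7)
My plan is to follow the structure of the proof of Theorem~\ref{TheA}, with an enhanced version of Lemma~\ref{LemA}. Specifically, I would first establish: a sequence
$$(X,I)\xrightarrow{(m_1,m_2)}(C,D)\xrightarrow{(e_1,e_2)}(T,Y)$$
with $I\in\mathbf{0}$ and $T\in\mathbf{1}$ is short exact if and only if $m_1$ and $e_2$ are isomorphisms, $m_2\colon I\to D$ exhibits $I$ as the coreflection of $D$ in $\mathbf{0}$, and $e_1\colon C\to T$ exhibits $T$ as the reflection of $C$ in $\mathbf{1}$. The fact that $m_1$ and $e_2$ must be isomorphisms is immediate from the proof of Lemma~\ref{LemA}, whose argument uses only that $T\in\mathbf{1}$ and $I\in\mathbf{0}$, never that $T$ is terminal or $I$ is initial.

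To extract the reflection/coreflection properties of $m_2$ and $e_1$, I would probe (E4) with the test morphism $(1_C,u)\colon (C,U)\to(C,D)$ for arbitrary $U\in\mathbf{0}$ and $u\colon U\to D$; its image $(e_1,u)\colon (C,U)\to(T,D)$ factors through $(T,U)\in\mathbf{1}\times\mathbf{0}$ and is thus null, so (E4) returns a factorisation of $u$ through $m_2$. Combined with $m_2$ being a monomorphism (from the componentwise criterion~(G)), this is exactly the universal property of the $\mathbf{0}$-coreflection. The dual probe of (E5) shows that $e_1$ is the $\mathbf{1}$-reflection unit. Conversely, if $m_2=\iota_D$ and $e_1=\rho_C$ are the coreflection and reflection units, then (E4) holds because any null composite $(e_1u_1,u_2)$ has $u_2$ factoring through some object of $\mathbf{0}$, which then lifts uniquely through $\iota_D$; condition (E5) is handled dually by factoring through $\rho_C$.

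The biconditional of the theorem is then a direct translation, using the observations preceding Lemma~\ref{LemA}: the existence of a short exact sequence for every $(C,D)$ is equivalent to every object of $\mathbb{C}$ admitting an epireflection into $\mathbf{1}$ and every object of $\mathbb{D}$ admitting a monocoreflection from $\mathbf{0}$. For the thinness claim, I would appeal to Theorem~\ref{TheB}, noting that $(\mathbb{C}\times\mathbf{0},\mathbf{1}\times\mathbb{D})$ is the product of the pretorsion theories $(\mathbb{C},\mathbf{1})$ and $(\mathbf{0},\mathbb{D})$, and then verify thinness of the two factors directly from the explicit description of their torsion and torsion-free parts obtained above. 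The main effort throughout is not conceptual but notational: keeping track of the four components $(m_1,m_2,e_1,e_2)$ simultaneously and isolating which component each of (E1)--(E5) constrains.
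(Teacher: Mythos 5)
Your handling of the main biconditional is correct, but it follows a genuinely different route from the paper's. The paper obtains this theorem by decomposing $(\mathbb{C}\times \mathbf{0},\mathbf{1}\times\mathbb{D})$ as the product of the one-sided pretorsion theories $(\mathbb{C},\mathbf{1})$ and $(\mathbf{0},\mathbb{D})$ and citing Theorem~\ref{TheB} together with Lemma~\ref{LemB} and its dual; it never analyses short exact sequences in $\mathbb{C}\times\mathbb{D}$ directly, and Lemma~\ref{LemB} itself is left without proof. You instead upgrade Lemma~\ref{LemA}: your observation that the split-epimorphism argument for $m_1$ (and dually for $e_2$) uses only $(T,I)\in\mathbf{1}\times\mathbf{0}$, never terminality or initiality, is accurate, and your probes of (E4) with $(1_C,u)\colon (C,U)\to (C,D)$ and of (E5) with its dual do extract exactly the universal properties of the monocoreflection and epireflection, with uniqueness supplied by $m_2$ being a monomorphism and $e_1$ an epimorphism; the converse verification of (E4)--(E5) from the (co)reflection units is likewise sound. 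Your route buys an explicit classification of the short exact sequences of the product and, as a by-product, a proof of the otherwise unproved Lemma~\ref{LemB} (take $\mathbb{D}$ trivial); the paper's route buys modularity, since Theorem~\ref{TheB} does the component-wise bookkeeping once and for all.

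The step that would fail is the thinness claim, although the fault lies with the statement rather than with your strategy. For the factor $(\mathbb{C},\mathbf{1})$ one computes $t(C)=C$ and $f(C)=r(C)$, the epireflection of $C$, so the comparison functor is $C\mapsto (C,r(C))\colon\mathbb{C}\to\mathbb{C}\times\mathbf{1}$; this is not essentially surjective as soon as $\mathbf{1}$ contains two non-isomorphic objects, since no pair $(A,Z)$ with $Z\not\cong r(A)$ lies in its essential image, nor is it full in general. In the extreme case $\mathbf{1}=\mathbb{C}$ (identity reflections) the comparison functor is the diagonal $\mathbb{C}\to\mathbb{C}\times\mathbb{C}$. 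Hence the ``direct verification'' of thinness of the two factors that you propose cannot be carried out: what makes the pretorsion theories of Theorem~\ref{TheA} thin is precisely that there $\mathbf{1}$ and $\mathbf{0}$ consist of the terminal and initial objects and are therefore equivalent to the trivial category, a feature lost in the present generality. The same problem affects the paper's own Lemma~\ref{LemB} and its use in Example~\ref{ExaB}, where $(\mathbf{Set}_\mathrm{Rel},\mathbf{Set}_\mathrm{Rel})$ would be thin only if the diagonal of $\mathbf{Set}_\mathrm{Rel}$ were an equivalence. Your reduction of thinness via Theorem~\ref{TheB} is the right move, but the base case it reduces to is false as stated, so either the thinness assertion must be dropped from the theorem or the notion of thinness must be reformulated.
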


This theorem is a consequence of the previous one and the following lemma along with its symmetric dual, which generalize the two lemmas given above.

\begin{lemma}\label{LemB}
For any category $\mathbb{C}$, a pair $(\mathbb{C},\mathbf{1})$ is a pretorsion theory in $\mathbb{C}$ if and only if $\mathbf{1}$ is a full replete epireflective subcategory of $\mathbb{C}$. Every such torion theory is thin.
\end{lemma}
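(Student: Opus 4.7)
The plan is to analyse when $(\mathbb{C},\mathbf{1})$ satisfies the pretorsion theory axioms, starting from the preliminary observation that $\mathcal{Z}=\mathbb{C}\cap\mathbf{1}=\mathbf{1}$, so a morphism is null precisely when it factors through some object of $\mathbf{1}$. Consequently, condition (T1) is automatic: any $f\colon C\to T$ with $T\in\mathbf{1}$ factors as $1_T\circ f$ through $T\in\mathcal{Z}$. Hence the whole content of the iff lies in (T2).

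For the direction ``$\mathbf{1}$ epireflective $\Rightarrow$ pretorsion theory'', I would propose, for each $C$, the candidate short exact sequence $C\xrightarrow{1_C}C\xrightarrow{e_C}r(C)$, where $e_C$ is the epireflection of $C$ into $\mathbf{1}$. Verifying the analogues of (E1)--(E5) from the proof of Lemma~\ref{LemA} is direct: $1_C$ is a mono and $e_C$ is an epi by hypothesis, the composite $e_C\circ 1_C=e_C$ factors through $r(C)\in\mathbf{1}$ hence is null, and (E4) is trivial since $u=1_C\circ u$ for any $u\colon U\to C$. The substantive step is (E5): if $v\colon C\to V$ is null, say $v=g\circ h$ with $h\colon C\to T$ and $T\in\mathbf{1}$, then the reflection's universal property supplies $h=h'\circ e_C$, so that $v=(gh')\circ e_C$ factors through $e_C$ as required.

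For the converse, given a short exact sequence $T\xrightarrow{m}C\xrightarrow{e}F$ with $F\in\mathbf{1}$, I would show $e$ realises $F$ as the epireflection of $C$. It is an epi by (E2). For any $g\colon C\to T'$ with $T'\in\mathbf{1}$, the composite $g\circ m$ lands in $T'\in\mathcal{Z}$ and is therefore null; (E5) then yields a factorization $g=g'\circ e$, with $g'$ unique since $e$ is epi. This is precisely the universal property of the reflection, so $\mathbf{1}$ is epireflective in $\mathbb{C}$.

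Thinness follows by reading off $t(C)=C$ (with kernel $1_C$) and $f(C)=r(C)$ (with cokernel $e_C$) from the sequence constructed above, so that the comparison functor $K\colon\mathbb{C}\to\mathbb{C}\times\mathbf{1}$ is given by $C\mapsto(C,r(C))$, $h\mapsto(h,r(h))$. The main obstacle I anticipate here is producing a clean verification that $K$ is an equivalence; my plan is to compose $K$ with the first projection $\pi_1\colon\mathbb{C}\times\mathbf{1}\to\mathbb{C}$ (observing that $\pi_1\circ K=\mathrm{id}_{\mathbb{C}}$) and to exhibit a quasi-inverse using the reflection on the $\mathbf{1}$-coordinate, thereby identifying $\mathbb{C}$ with the essential image of $K$, namely the graph of the reflection functor inside $\mathbb{C}\times\mathbf{1}$.
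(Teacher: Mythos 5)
The paper states Lemma~\ref{LemB} without giving a proof, so there is nothing to compare against directly; judged on its own merits, your argument for the equivalence itself is correct and is the natural one. The observation that $\mathcal{Z}=\mathbb{C}\cap\mathbf{1}=\mathbf{1}$ makes (T1) automatic, the candidate sequence $C\xrightarrow{1_C}C\xrightarrow{e_C}r(C)$ with $1_C$ a kernel of $e_C$ (trivially) and $e_C$ a cokernel of $1_C$ (via the universal property of the reflection applied to the middle factor of a null morphism) is verified correctly, and the converse --- extracting the reflection from the cokernel in any short exact sequence $T\xrightarrow{m}C\xrightarrow{e}F$, using that $g\circ m$ is null for every $g\colon C\to T'$ with $T'\in\mathbf{1}$ --- is sound.

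The thinness part, however, has a genuine gap, and the obstacle you anticipate is fatal rather than cosmetic. You correctly identify the essential image of $K\colon C\mapsto(C,r(C))$ as the graph of the reflection functor inside $\mathbb{C}\times\mathbf{1}$; but this graph is in general a proper, non-full subcategory, so $K$ is neither essentially surjective nor full and cannot be an equivalence. Concretely, $\mathbf{1}=\mathbb{C}$ is always a full replete epireflective subcategory of $\mathbb{C}$ (identity reflection, with identity units), and then $K$ is the diagonal $\mathbb{C}\to\mathbb{C}\times\mathbb{C}$, which is an equivalence only when $\mathbb{C}$ is equivalent to the empty or the terminal category. Likewise, for $\mathbb{C}=\mathbf{Set}$ and $\mathbf{1}$ the full subcategory of sets with at most one element, the object $(\varnothing,1)$ of $\mathbb{C}\times\mathbf{1}$ is not isomorphic to any $(C,r(C))$, since $C\cong\varnothing$ forces $r(C)\cong\varnothing\not\cong 1$. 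So the final sentence of the lemma cannot be proved as stated: it holds in the earlier special cases of the paper precisely because there $\mathbf{1}$ consists of terminal objects and is therefore equivalent to the terminal category, which is the extra hypothesis your sketch actually needs. Rather than trying to complete the quasi-inverse construction, you should either add that hypothesis or flag the thinness claim as erroneous in this generality.
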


\begin{example}\label{ExaB} Consider the category $\mathbf{Set}_\mathrm{Rel}$ of sets and relations between sets as morphisms. Then $(\mathbf{Set}_\mathrm{Rel},\mathbf{Set}_\mathrm{Rel})$ is a pretorsion theory (it can be obtained by Lemma~\ref{LemB}). 
Apply Theorem~\ref{TheB} to get a pretorsion theory which is the product of this pretorsion theory with the one from Example~\ref{ExaA}. Objects of the resulting category can be thought of as sets having three types of elements: negative, positive (as in Example~\ref{ExaA}), and neutral (those contributed by $\mathbf{Set}_\mathrm{Rel}$). Torsion part of an object can be obtained by discarding the positive elements, while torsion-free part by discarding the negative elements. 
\end{example}

\section{Concluding remarks}

The product construction exhibited in Theorem~\ref{TheB} can easily be realised as a product in a suitable category of pretorsion theories. Define a morphism of pretorsion theories $(\mathcal{C},\mathcal{T},\mathcal{F})\to (\mathcal{C}',\mathcal{T}',\mathcal{F}')$ to be a functor $M\colon \mathcal{C}\to\mathcal{C}'$ that preserves torsion and torsion-free objects, as well those short exact sequences where the left object is torsion and the right object is torsion-free. We leave exploration of this category, as well the relation to its subcategory of thin pretorsion theories for future work. We only make two remarks here that should be useful for such exploration:
\begin{itemize}
\item It seems that the category of pretorsion theories (or more widely, of multi-pointed torsion theories), itself has a useful pretorsion theory, where the torsion pretorsion theories are of the form $(\mathcal{C},\mathcal{C},\mathcal{F})$, i.e., precisely those captured by Lemma~\ref{LemB}, while torsion-free pretorsion theories are their symmetric duals, i.e., of the form $(\mathcal{C},\mathcal{T},\mathcal{C})$.

\item One must perhaps consider a wider category of \emph{multi-pointed torsion theories}, defined in a similar way as pretorsion theories, but using instead 
a quadruple $(\mathcal{C},\mathcal{T},\mathcal{F},\mathcal{N})$ where $\mathcal{N}$ is any ideal and is used in the place of the ideal of morphisms factoring through objects in $\mathcal{T}\cap \mathcal{F}$.
\end{itemize}
The term ``multi-pointed'' comes from \cite{GJU12}, where it is used to refer to a category equipped with an ideal of morphisms. Coincidentally, it is used in \cite{GJ20} as well, where a notion of torsion theory suggested above is introduced, however, with a few restrictions:
\begin{itemize}
    \item[(T0)] $\mathcal{N}$ is a required to be a ``closed ideal'', which means that a morphism belongs to $\mathcal{N}$ if and only if it factors through an object whose identity morphism belongs to $\mathcal{N}$.

    \item[(T3)] Every identity morphism is required to have a kernel and a cokernel.
\end{itemize}
It is then shown in \cite{GJ20} (see Corollary~2.3 there) that the objects whose identity is a null morphism are precisely those that are both torsion and torsion-free. So pretorsion theories in the sense of \cite{FFG21} can be seen as torsion theories in multi-pointed categories in the sense of \cite{GJ20} where (T3) is dropped. We suggest to drop (T0) as well.

The examples presented in this paper take advantage of dropping (T3). For instance, neither Example~\ref{ExaA} nor Example~\ref{ExaB} satisfy (T3) for the simple reason that in the category of sets, non-empty sets do not admit a morphism to the empty set (see also Remark~7.3 in \cite{FFG21}). The property (T3) behaves with products similarly to the other properties, so if a pretorsion theory does not satisfy (T3) neither will its product with another pretorsion theory (irrespective of whether the latter has the property (T3) or not).

\end{document}